\newtheorem{theorem}{Theorem}
\newtheorem{lemma}[theorem]{Lemma}
\newtheorem{proposition}[theorem]{Proposition}
\theoremstyle{definition}
\newtheorem{definition}[theorem]{Definition}
\numberwithin{equation}{section} \numberwithin{theorem}{section}
\theoremstyle{remark}
\newtheorem{remarks}[theorem]{Remarks}
\def\z{\,_{\dot z}\,}
\def\cx{\rotatebox[origin=c]{180}{$\mathrm{Y}$}(x)}
\def\cy{\rotatebox[origin=c]{180}{$\mathrm{Y}$}(y)}
\def\cz{\rotatebox[origin=c]{180}{$\mathrm{Y}$}(z)}
\def\cxz{\rotatebox[origin=c]{180}{$\mathrm{Y}$}(x+z)}
\def\czy{\rotatebox[origin=c]{180}{$\mathrm{Y}$}(z+y)}
\def\cmx{\rotatebox[origin=c]{180}{$\mathrm{Y}$}(-x)}
\def\cmy{\rotatebox[origin=c]{180}{$\mathrm{Y}$}(-y)}
\def\czmy{\rotatebox[origin=c]{180}{$\mathrm{Y}$}(z-y)}
\def\D{D^{ \star}}
\def\Di{D^{\star (i)}}
\def\T{\Delta}
\def\vac{|0\rangle}                            %% vacuum vector
\def\tt{\otimes}                               %% tensor product
\def\<{\langle}
\def\>{\rangle}
\def\dsum{\displaystyle\sum}
\def\vac{\mathbf{1}}                            %% vacuum vector
\def\z{\,_{\dot z}\,}
\def\ddx{\frac{d}{dx}\,}
\def\dxyz{\delta(x-y,z)}
\def\dmyxz{\delta(-y+x,z)}
\def\dxzy{\delta(x-z,y)}
\def\dzxy{\delta(z-x,y)}
\def\v2a{(V,\z,\vac,D)}
\begin{document}

\title{Vertex Coalgebras, Co-Associator and Co-Commutator formulas}

\author[Vertex Coalgebras, Co-Associator and Co-Commutator formulas]{Florencia Orosz Hunziker$^*$}
\thanks {\textit{$^{*}$Ciem-FAMAF, Universidad Nacional de C\'ordoba, Ciudad Universitaria, (5000) C\'ordoba, Argentina. \hfill \break \indent  \, e-mail: o.flor.po@gmail.com}}
%\address{{\textit{Ciem-FAMAF, Universidad Nacional de C\'ordoba, Ciudad Universitaria, (5000) C\'ordoba, Argentina. \hfill \break \indent  e-mail: o.flor.po@gmail.com}}}

\maketitle

\date{November 28, 2012}
%\subjclass{Primary ;\\Secondary }

\begin{abstract}
Based on the definition of vertex coalgebra introduced by Hubbard \cite{H}, we prove that this notion can be reformulated using the Co-Commutator, Co-Skew symmetry and Co-Associator formulas without restrictions on the grading.
\end{abstract}
%\maketitle
%\tableofcontents

%%%%%%%%%%%%%%%%%%%%%%%%%%%%%%%%%%%%%%%%%%%%%%%%%%%%%%%%%%%%%%%%%%%%%%%%%

\section{Introduction}
The notion of vertex coalgebra was introduced by Hubbard in \cite{H} as a generalization of the notion of vertex operator coalgebra. Many of the properties described in that work are in some sense dual to the properties satisfied by vertex algebras. Since the original definition of vertex algebra was introduced by Borcherds in the 1980s, several reformulations of the definition have been studied. A vertex algebra can be defined using Lie algebra-type axioms or it can be seen as a generalization of a commutative and associative algebra with unit, focusing on the commutator or associator formulas. These formulations are introduced and thoroughly studied in \cite{LL}, \cite{K}, \cite{Li}. A vertex algebra can also be defined as a Lie conformal algebra and a left symmetric differential algebra with unit satisfying certain compatibilities, as described in \cite{BK}. These different approaches engender several equivalent definitions of vertex algebra based on different axioms. Our goal is to study whether these approaches can be dualized to obtain equivalent definitions of vertex coalgebra.

The study of these approaches is not automatic in the case of vertex coalgebras as there might be axioms in the definition of vertex algebra that do not make sense in their dual version. For instance, axioms such as "weak commutativity" and "weak associativity" do not make sense unless we require the use of grading on a vertex coalgebra (see \cite{H}). In \cite{MN}, the authors show that with a coefficient approach, the Jacobi identity can be proven to follow from the commutator and associator formulas. Based on that idea, in this preliminary version we first obtain a reformulation of the original definition of vertex coalgebra analogous to the original definition of vertex algebra introduced by Borcherds. More precisely, we prove that the original definition of vertex coalgebra can be reformulated in two equivalent definitions: One based on the Co-Commutator and the Co-Skew symmetry formulas, the other based on the Co-Associator and Co-Skew symmetry formulas. Our goal is to study whether all the possible approaches to define vertex algebras can be dualized to get equivalent definitions of vertex coalgebra in an expanded version of this work.

In the next section we introduce basic definitions and results, as well as the original definition of vertex coalgebra and its basic properties. In Section 3, following the idea presented in \cite{MN}, we prove that the coefficient version of the Co-Jacobi identity (called Co-Borcherds formula) can be deduced from weaker versions of this formula. Finally, in Section 4 we prove that the definition of vertex coalgebra can be reformulated based on the Co-Commutator, Co-Skew symmetry, Co-Associator and $\D$ formulas.

\

\vskip.2cm
%%%%%%%%%%%%%%%%%%%%%%%%%%
\section{Basic properties and the original definition}
%%%%%%%%%%%%%%%%%%%%%%%%%%

\vskip.2cm

We begin by introducing some basic definitions and results from the calculus of formal variables. A more thorough description of the concepts defined in this section, as well as the proofs of the following results can be found in \cite{FLM}, \cite{K} and \cite{LL}.

We will consider $x,y$ and $z$ commuting formal variables, and define the {\em formal $\delta$-function} to be
\begin{equation*}
\delta(x,y)= \dsum_{n \in \mathbb{Z}} x^n y^{-n-1}.
\end{equation*}
Given an integer $n$, we define
\begin{equation*}
(x\pm y)^n= \dsum_{k \geq 0} \binom{n}{k} x^{n-k} (\pm y)^k,
\end{equation*}
where $\binom{n}{k}=\frac{n(n-1)...(n-k+1)}{k!}$ for $n \in \mathbb{Z}$ and $k \geq 0$.
Note that $\delta(x-y,z)$ is a formal power series in nonnegative powers of $y$.
The formal residue `$Res_x,$' of a series $v(x)=\dsum_{n \in \mathbb{Z}} v_n x^{n}$ refers to the coefficient of the negative first power, that is
\begin{equation*}
Res_x v(x)= v_{-1}.
\end{equation*}
We will also use Taylor's Theorem, which states that given a $\mathbb{C}$-vector space $V$ and a formal series $f(x)\in V[[x,x^{-1}]],$
\begin{align} \label{Taylor}
e^{y \ddx}f(x)=f(x+y).
\end{align}
\vskip.4cm
\noindent We define the linear map \ $T: V \tt V \longrightarrow V  \tt V$ be the transposition operator defined by $T(v \tt u)=u \tt v$ for all $u,v \in  V$.

Next, we enumerate basic properties of the formal series $\delta$ that we will need throughout this work:
\begin{align}
&\mathrm{(a)} \ Res_y \, \delta(x,y) =Res_y \, \delta(y,x)=1. \label{d3}\\
&\mathrm{(b)} \ \dxyz = \delta(z+y,x). \label{d1}\\
&\mathrm{(c)} \ \dxyz - \dmyxz = \dxzy. \label{d2}
\end{align}
We will also make use of the following result (cf. Proposition 2.1.1 \cite{FLM}):

Given a formal Laurent series $X(x,y) \in (\textrm{Hom}(V,W))[[x,x^{-1}, y, y^{-1}]]$ with coefficients which are homomorphisms from a vector space $V$ to a vector space $W$, if $lim_{x \rightarrow y} X(x,y)$ exists, (i.e. when $X(x,y)$ applied to any element of $V$ setting $x=y$ leads to only finite sums in $W$)
we have:
\begin{align*}
\delta(x,y)X(x,y)=\delta(x,y)X(y,y).%\label{probar}
\end{align*}
In particular, we have
\begin{align}
Res_x \ \delta(x,y)X(x,y)=X(y,y). \label{hub1}
\end{align}

Now that we have listed the results we will need, let us recall the original definition of vertex coalgebra introduced in \cite{H}.
\begin{definition} \label{def1}
A \textit{vertex coalgebra} consists of a $\mathbb{C}$-vector space $V$, together with linear maps
\begin{align*}
\cx: &V \longrightarrow (V \otimes V)[[x,x^{-1}]], \\
c: &V\longrightarrow \mathbb{C}
\end{align*}
called the {\em coproduct} and {\em covacuum map}, respectively, satisfying the following axioms for all $v \in V$:
\vskip.2cm
\noindent $\bullet$ Left counit:
\begin{equation*} %\label{covac1}
  (c \otimes Id)\cx v=v.
\end{equation*}
\vskip.2cm
\noindent $\bullet$ Cocreation:
\begin{align} \label{covac2}
  (Id \otimes c)\cx v \in V[[x]] \ \ \ \  \textrm{ and } \ \ \ \ (Id \otimes c)\cx v|_{x=0}=v.
\end{align}
\vskip.2cm
\noindent $\bullet$ Truncation:
\begin{align}
\cx v \in (V \otimes V)((x^{-1})). \label{trun}
\end{align}
\vskip.2cm
\noindent $\bullet$ Co-Jacobi identity:
\begin{align}
&\dxzy (\cz \otimes Id)\cy= \nonumber \\
&\ \ \ \ \ \ \ \ \ \dxyz (Id \otimes \cy)\cx- \  \dmyxz (T \otimes Id)(Id \otimes \cx) \cy. \label{CoJ}
\end{align}
\vskip.2cm
\end{definition}
\begin{remarks}(1) The operator $\cx$ is linear so that, for example, $(Id \tt \cy)$ acting on the
coefficients of $\cx v \in (V \tt V )[[x, x^{-1}]]$ is well defined. \\
(2) Notice also, that when each expression is applied to any element of $V$, the coefficient of each monomial
in the formal variables is a finite sum due to the truncation condition (\ref{trun}).
\end{remarks}
As an immediate result we have,
\begin{proposition} \label{nothalf}
Let $(V, \cx ,c)$ be a vertex coalgebra as in Definition \ref{def1}. Then the following formulas hold:
\vskip.2cm
\noindent \rm{(1)} \textit{Co-Commutator formula:}
\begin{align}
Res_z \ \dxzy (\cz &\otimes Id)\cy=  \nonumber\\
&(Id \otimes \cy)\cx- (T \otimes Id)(Id \otimes \cx) \cy.  \label{cocom}
\end{align}
\vskip.1cm
\noindent \rm{(2)} \textit{Co-Associator formula:}
\begin{align}
&(\cz \otimes Id)\cy= \nonumber \\
&(Id \otimes \cy)\czy-Res_x \  \dmyxz (T \otimes Id)(Id \otimes \cx) \cy.\label{coas}
\end{align}
\end{proposition}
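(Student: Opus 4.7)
The plan is to derive both formulas from the Co-Jacobi identity (\ref{CoJ}) by taking an appropriate residue in a formal variable and simplifying the $\delta$-factors via (\ref{d3}), (\ref{d1}), and (\ref{hub1}).

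For the Co-Commutator formula (\ref{cocom}), I would apply $\mathrm{Res}_z$ to both sides of (\ref{CoJ}). The left-hand side is already in the desired shape. On the right-hand side, neither $(Id \otimes \cy)\cx$ nor $(T \otimes Id)(Id \otimes \cx)\cy$ involves $z$, so the residue slips past them and acts only on the $\delta$-factors. From the definition of $\delta$, a direct computation gives $\mathrm{Res}_z \delta(x-y,z) = 1 = \mathrm{Res}_z \delta(-y+x,z)$, and formula (\ref{cocom}) follows immediately.

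For the Co-Associator formula (\ref{coas}), I would instead take $\mathrm{Res}_x$ of both sides of (\ref{CoJ}). Since $(\cz \otimes Id)\cy$ does not involve $x$, rewriting $\delta(x-z,y) = \delta(y+z,x)$ via (\ref{d1}) yields $\mathrm{Res}_x \delta(x-z,y) = 1$, so the left-hand side reduces to $(\cz \otimes Id)\cy$. On the right, the second term is already $\mathrm{Res}_x \delta(-y+x,z)(T \otimes Id)(Id \otimes \cx)\cy$, exactly matching the statement. For the first term, I would rewrite $\delta(x-y,z) = \delta(z+y,x) = \delta(x,z+y)$, where the first equality is (\ref{d1}) and the second is the symmetry of $\delta$ in its two arguments, and then apply (\ref{hub1}) with $X(x) = (Id \otimes \cy)\cx$, obtaining $(Id \otimes \cy)\cx|_{x \to z+y} = (Id \otimes \cy)\czy$. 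Assembling the three pieces yields (\ref{coas}).

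The step that needs care is the application of (\ref{hub1}), which demands that $\lim_{x \to z+y}(Id \otimes \cy)\cx$ exists when evaluated on any $v \in V$. This is precisely where the truncation axiom (\ref{trun}) enters: since $\cx v \in (V \otimes V)((x^{-1}))$ has only finitely many positive powers of $x$, the substitution $x \mapsto z+y$ (with the standard convention of expanding in nonnegative powers of $y$) produces a well-defined formal series in $y$ and $z$ whose coefficients are finite sums in $V \otimes V$. Every other step is a purely mechanical manipulation of $\delta$-series.
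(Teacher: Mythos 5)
Your proposal is correct and follows essentially the same route as the paper: take $Res_z$ of the Co-Jacobi identity and use (\ref{d3}) for the Co-Commutator formula, and take $Res_x$, rewriting the $\delta$-factors via (\ref{d1}) and applying (\ref{hub1}), for the Co-Associator formula. Your extra remark justifying the applicability of (\ref{hub1}) via the truncation axiom is a sound addition that the paper leaves implicit.
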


\vskip.2cm
\begin{proof}
 (1) Taking $Res_z$ to the Co-Jacobi identity (\ref{CoJ}) we obtain
\begin{align}
&Res_z\dxzy (\cz \otimes Id)\cy= \nonumber \\
& Res_z \ \dxyz (Id \otimes \cy)\cx-  Res_z \  \dmyxz (T \otimes Id)(Id \otimes \cx) \cy. \label{youknow}
\end{align}
Using (\ref{d3}) twice on the right side of (\ref{youknow}) we obtain (\ref{cocom}).
\vskip.2cm
\noindent (2) Taking $Res_x$ to the Co-Jacobi identity (\ref{CoJ}) we obtain
\begin{align}
&Res_x\dxzy (\cz \otimes Id)\cy= \nonumber \\
 &Res_x \ \dxyz (Id \otimes \cy)\cx-
  Res_x \  \dmyxz (T \otimes Id)(Id \otimes \cx) \cy. \label{howmuch}
\end{align}
Using (\ref{d3}) and (\ref{d1}) on the left hand side of (\ref{howmuch}) we get
\begin{align}
&(\cz \otimes Id)\cy= \nonumber \\
&Res_x \ \dxyz (Id \otimes \cy)\cx- Res_x \  \dmyxz (T \otimes Id)(Id \otimes \cx) \cy. \label{nunca}
\end{align}
Finally, using (\ref{d1}) and (\ref{hub1}) on the first term of the right hand side of (\ref{nunca}) we obtain (\ref{coas}).

\end{proof}
As pointed in \cite{H}, it is natural to question the effect of applying a formal derivative, $\ddx$, to the comultiplication operator. In order to study that effect we need to introduce a map. Given a vertex coalgebra $V$, we define the linear map $D^{\star}: V \longrightarrow V$ as
\begin{align}
D^{\star}=Res_x \ x^{-2}(Id \otimes c) \cx. \label{Ddef}
\end{align}
The following proposition obtained in \cite{H} describes the main properties of the operator $\D$ and its relationship to the formal derivative.
\begin{proposition}The map $D^{\star}$ satisfies the following properties:
\begin{align}
&\rm{(1) }
\ \ \ (D^{\star} \otimes Id) \cx = \ddx \cx. \label{ds1} \ \ \ \ \ \ \ \ \ \ \ \ \ \ \ \ \ \ \\
&\rm{(2) }\ \ \
(e^{zD^{\star}} \otimes Id) \cx= \cxz. \nonumber \\ %label{ds2}\ \ \ \ \ \ \ \ \ \ \ \ \ \ \ \ \ \ \\
&\rm{(3) }\ \ \
e^{zD^{\star}} =(Id \otimes c) \cz. \label{ds3}\ \ \ \ \ \ \ \ \ \ \ \ \ \ \ \ \ \ \\
&\rm{(4)} \ \ \ \textrm{Co-Skew symmetry:} \nonumber \\
 & \ \ \ \ \ \ \ \ \ \ \ \ \ \ \ \ \ \ \ \ \ \ \ \ \ \ \ \ \ \ \ \  T\cx= \cmx e^{xD^{\star}}.\label{coskew}\ \ \ \ \ \ \ \ \ \ \ \ \ \ \ \ \ \ \\
&\rm{(5) }  \ \ \
\ddx \cx= \cx D^{\star} - (Id \otimes D^{\star}) \cx. \nonumber %\label{ds4}\ \ \ \ \ \ \ \ \ \ \ \ \ \ \ \ \ \
\\
&\rm{(6) }\ \ \
(Id \tt e^{-zD^{\star}}) \cx e^{zD^{\star}}= \cxz. \nonumber %\label{ds5}
\end{align}
\label{hubb2}
\end{proposition}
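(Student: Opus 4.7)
My plan is to derive all six identities from the Co-Jacobi identity \eqref{CoJ} by applying the covacuum $c$ to one of the three tensor slots and taking suitable residues, then bootstrapping.

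For (1), I apply $Res_x\, Res_z\, z^{-2}(Id \otimes c \otimes Id)$ to \eqref{CoJ}. On the LHS, $Res_x\, \dxzy = 1$ kills the delta, and the remaining $Res_z\, z^{-2}(Id \otimes c \otimes Id)(\cz \otimes Id)\cy$ is exactly $(\D \otimes Id)\cy$ by \eqref{Ddef}. On the RHS, the middle $c$ combined with left counit collapses both tensor triples to factors involving $\cx$, and the two residues $Res_z\, z^{-2}\dxyz = (x-y)^{-2}$ (expanded in nonneg $y$) and $Res_z\, z^{-2}\dmyxz = (-y+x)^{-2}$ (expanded in nonneg $x$), paired with $Res_x(\cdot)\cx$, contribute respectively the positive-index and negative-index portions of $\frac{d}{dy}\cy$; they reassemble to $\frac{d}{dy}\cy$, giving (1).

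Identities (2), (3), (5), and (6) then bootstrap from (1). Iteration of (1) yields $(\D^n \otimes Id)\cx = \frac{d^n}{dx^n}\cx$, and summing via Taylor's theorem \eqref{Taylor} gives (2). Setting $G(z) := (Id \otimes c)\cz$, (1) together with the tensor-product commutation $(Id \otimes c)(\D \otimes Id) = \D(Id \otimes c)$ yields $\frac{d}{dz}G = \D G$; combined with $G(0) = Id$ from cocreation \eqref{covac2}, this forces $G(z) = e^{z\D}$, which is (3). For the co-Leibniz relation $\cx \D = (\D \otimes Id + Id \otimes \D)\cx$, I apply $Res(Id \otimes Id \otimes c)$ to a relabeled analogue of \eqref{CoJ} whose LHS reads $(\cx \otimes Id)\cz$; using (3), the second Jacobi-term vanishes for residue reasons (its analogue of $\dmyxz$ and the exponential $e^{(\cdot)\D}$ are both nonneg in the integration variable), while the first simplifies via \eqref{hub1} and (2) to $(\cx \otimes c)\cz = (e^{z\D} \otimes e^{z\D})\cx$. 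Extracting the coefficient of $z^1$ gives the co-Leibniz relation, which combined with (1) is (5); exponentiating the co-Leibniz relation yields $\cx e^{z\D} = (e^{z\D} \otimes e^{z\D})\cx$, and premultiplication by $(Id \otimes e^{-z\D})$ together with (2) then gives (6).

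The main obstacle is (4), Co-Skew symmetry. I apply $Res_z(Id \otimes Id \otimes c)$ to \eqref{CoJ}. The decisive step is the $\delta$-function identity \eqref{d2}, $\dxzy = \dxyz - \dmyxz$: applying \eqref{hub1} to each summand separately produces the substitution $z \to x - y$ in $\cz e^{y\D}$ with two different expansion conventions (nonneg $y$ from $\dxyz$ and nonneg $x$ from $\dmyxz$). On the RHS, the $z$-residues $Res_z\, \dxyz$ and $Res_z\, \dmyxz$ each equal $1$, giving $(Id \otimes e^{y\D})\cx - (e^{x\D} \otimes Id) T\cy$. Now (6) identifies the nonneg-$y$ piece with $(Id \otimes e^{y\D})\cx$, so these cancel and leave the nonneg-$x$ piece equal to $(e^{x\D} \otimes Id) T\cy$; applying (2) to rewrite the nonneg-$x$ piece as $(e^{x\D} \otimes Id)\cmy\, e^{y\D}$ and cancelling the invertible operator $(e^{x\D} \otimes Id)$ yields $T\cy = \cmy\, e^{y\D}$, which is (4). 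The delicate bookkeeping throughout concerns tracking which $\delta$-expansion convention (nonneg $y$ vs nonneg $x$) is in force at each stage, so that (6) and (2) apply with the correct convention.
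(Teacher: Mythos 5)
Your derivation is correct, and it is worth noting that the paper itself offers no proof of this proposition: it is imported from Hubbard \cite{H} with a citation, so there is no in-paper argument to compare against line by line. Your route is a coherent self-contained one: (1) follows from applying $Res_x\,Res_z\,z^{-2}(Id\otimes c\otimes Id)$ to \eqref{CoJ} (the left counit collapses both right-hand triples to $\cx$, and $Res_x\big[(x-y)^{-2}-(-y+x)^{-2}\big]\cx=\frac{d}{dy}\cy$ checks out coefficient by coefficient, though your labels for which expansion produces which index range are swapped -- harmless); (2) and (3) bootstrap as you say; the relabeled Co-Jacobi identity with $(Id\otimes Id\otimes c)$ and a residue in the free variable does yield $\cx e^{z\D}=(e^{z\D}\otimes e^{z\D})\cx$, because the second Jacobi term is honestly nonnegative in the integration variable and hence killed by the residue, giving (5) and (6); and your extraction of (4) from $Res_z$ of \eqref{CoJ} via \eqref{d2}, using (6) to cancel the nonnegative-$y$ expansion against $(Id\otimes e^{y\D})\cx$ and (2) plus invertibility of $e^{x\D}\otimes Id$ on the remainder, is sound (the substitutions $z\to x\mp y$ are legitimate since each bigraded coefficient receives a single term). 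One structural remark: your logical order is (1) $\Rightarrow$ (2),(3) $\Rightarrow$ (5),(6) $\Rightarrow$ (4), i.e., you prove the $\D$-bracket and conjugation formulas \emph{before} Co-Skew symmetry and then use (6) to obtain (4); the paper's own Proposition \ref{preta} runs the dependency the other way, deriving (5) and (6) from (1) and (4) by a short product-rule computation. Both orderings are valid, but yours has the advantage of extracting everything directly from the Co-Jacobi identity, at the cost of the more delicate $\delta$-expansion bookkeeping in step (4) that the product-rule route avoids.
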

\vskip.2cm
We consider the following expansion of the map $\cx$:

\begin{align*}
\cx v = \sum_{n \in \mathbb{Z}} \Delta_{n} (v) x^{-n-1}
\end{align*}
where $\Delta_{n}:V \longrightarrow V \otimes V $ is the coefficient of $x^{-n-1}$ in the series $\cx$.
If we look for the coefficient of $x^{-p-1}y^{-q-1}z^{-r-1}$ in the Co-Jacobi identity (\ref{CoJ}) we obtain what we will call the {\em Co-Borcherds identity}:
\begin{align}
\sum_{i \geq 0} &\binom{p}{i} (\Delta_{r+i} \otimes Id) \Delta_{p+q-i}= \nonumber \\
&\sum_{i \geq 0} (-1)^i \binom{r}{i} \bigg{[} (Id \otimes \Delta_{q+i}) \Delta_{p+r-i} - (-1)^r (T \otimes Id)(Id \otimes \Delta_{p+i}) \Delta_{q+r-i} \bigg{]}.
\end{align}
We can rewrite Definition \ref{def1} considering coefficients. With this approach we obtain the following definition of a vertex coalgebra, evidently equivalent to the one previously introduced.
\begin{definition}  A {\em vertex coalgebra} is a $\mathbb{C}$-vector space $V$ endowed with a family of linear coproducts indexed by $n \in \mathbb{Z}$ and  a linear map $c$,
\begin{align*}
\Delta_{n}: &V \longrightarrow V \otimes V  \\
c: &V\longrightarrow \mathbb{C}
\end{align*}
satisfying the following axioms:
\vskip.2cm
\noindent $\bullet$ Left counit:
\begin{align*}
  &(c \otimes Id)\Delta_{n} =Id \ \  \textrm{ if $n=-1$, \ and }  \\
  &(c \otimes Id)\Delta_{n} =0 \ \ \ \ \ \textrm{if $n \neq -1$}.
\end{align*}
\vskip.2cm
\noindent $\bullet$ Cocreation:
\begin{align*}
  &(Id \otimes c)\Delta_{n}=0   \textrm{ if $n \geq 0$, \ and }  \\
  &(Id \otimes c)\Delta_{-1} = Id.
\end{align*}
\vskip.2cm
\noindent $\bullet$ Truncation: \  For each $v \in V$ there exists $N$ such that
\begin{align*}
\Delta_{n} v=0 \textrm{ for $n \leq N$}.
\end{align*}
\vskip.2cm
\noindent $\bullet$ Co-Borcherds identity: \ For all $p, q, r \in  \mathbb{Z}$,
\begin{align}
&\sum_{i \geq 0} \binom{p}{i} (\Delta_{r+i} \otimes Id) \Delta_{p+q-i}= \nonumber \\
&\sum_{i \geq 0} (-1)^i \binom{r}{i} \bigg{[} (Id \otimes \Delta_{q+i}) \Delta_{p+r-i} - (-1)^r (T \otimes Id)(Id \otimes \Delta_{p+i}) \Delta_{q+r-i} \bigg{]}. \label{CoB}
\end{align}
\vskip.2cm \label{def2}
\end{definition}

Under this coefficient approach the definition of $D^{\star}$ corresponds to the linear map
\begin{align}
& \D :V \longrightarrow V \\
& \D = (Id \otimes c) \Delta_{-2}. \label{Cod}
\end{align}

\begin{proposition}
Let $V$ be a vertex coalgebra as in Definition \ref{def2} and $\D$ defined as in (\ref{Cod}). Then, for all $q \in \mathbb{Z},$
\vskip.3cm
\noindent \rm{(1)}
\begin{align}
(\D \tt Id) \Delta_{q}=-q\Delta_{q-1}. \label{coder}
\end{align}
\vskip.2cm
\noindent \rm{(2)}  \textit{If we denote $D^{\star(i)}= \frac{1}{i!}(D^{\star })^i$ for $i \geq 0$, then}
\begin{align}
\Di=(Id \otimes c) \Delta_{-1-i}. \label{valles}
\end{align}
\end{proposition}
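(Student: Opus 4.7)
The plan is to reduce both statements to formal-series identities already recorded in Proposition \ref{hubb2} and then extract coefficients. Since Definition \ref{def2} is just the coefficient reformulation of Definition \ref{def1}, we are free to work with $\cx = \sum_{n\in\mathbb{Z}} \Delta_n x^{-n-1}$ and invoke the identities (\ref{ds1}) and (\ref{ds3}).

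For part (1), I would start from the identity $(D^\star \otimes Id)\cx = \frac{d}{dx}\cx$ stated in (\ref{ds1}). Expanding the left-hand side gives
\begin{equation*}
(D^\star \otimes Id)\cx \;=\; \sum_{n\in\mathbb{Z}} (D^\star \otimes Id)\Delta_n \, x^{-n-1},
\end{equation*}
while differentiating the series termwise gives
\begin{equation*}
\frac{d}{dx}\cx \;=\; \sum_{n\in\mathbb{Z}} (-n-1)\Delta_n \, x^{-n-2} \;=\; \sum_{q\in\mathbb{Z}} (-q)\Delta_{q-1}\, x^{-q-1},
\end{equation*}
after the reindexing $n = q-1$. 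Comparing the coefficient of $x^{-q-1}$ on both sides yields $(D^\star \otimes Id)\Delta_q = -q\Delta_{q-1}$.

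For part (2), I would start from (\ref{ds3}), which reads $e^{zD^\star} = (Id \otimes c)\cz$. Expanding the exponential on the left gives
\begin{equation*}
e^{zD^\star} \;=\; \sum_{i\geq 0} \frac{1}{i!}(D^\star)^i z^i \;=\; \sum_{i\geq 0} D^{\star(i)} z^i,
\end{equation*}
while the right-hand side expands as
\begin{equation*}
(Id \otimes c)\cz \;=\; \sum_{n\in\mathbb{Z}} (Id \otimes c)\Delta_n\, z^{-n-1}.
\end{equation*}
Setting $-n-1 = i$, i.e.\ $n = -1-i$, and comparing coefficients of $z^i$ for each $i \geq 0$ gives $D^{\star(i)} = (Id \otimes c)\Delta_{-1-i}$, as claimed. (Note that since $e^{zD^\star}$ involves only nonnegative powers of $z$, this comparison also recovers $(Id \otimes c)\Delta_n = 0$ for $n \geq 0$, consistent with cocreation, and reduces to the cocreation identity $(Id \otimes c)\Delta_{-1} = Id$ in the case $i=0$.)

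The main obstacle is essentially bookkeeping: making sure the reindexing between the series variable exponent and the coefficient index is correct, and confirming that the termwise differentiation and exponential expansion are legitimate operations on the series in question. Both are justified because, by the truncation axiom, $\cx v$ lies in $(V\otimes V)((x^{-1}))$, so the formal derivative and, after applying $Id\otimes c$, the exponential series produce only finite sums in each coefficient.
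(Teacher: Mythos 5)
Your proposal is correct and follows exactly the paper's route: the paper simply observes that (\ref{coder}) and (\ref{valles}) are the coefficient formulations of (\ref{ds1}) and (\ref{ds3}), respectively, and your argument supplies the straightforward reindexing details behind that observation. The coefficient extractions and the reindexings $n=q-1$ and $n=-1-i$ are all accurate.
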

\begin{proof}
(1) If we take the generating function in $x$ of (\ref{coder}) we obtain
\begin{align*}
(\D \tt Id) \cx= \ddx \cx.
\end{align*}
Therefore, (\ref{coder}) is just the coefficient formulation of (\ref{ds1}).

\noindent(2) It is easy to see that (\ref{valles}) is merely the coefficient formulation of (\ref{ds3}).

\end{proof}
Manipulating expressions that involve coefficients we will get coefficient versions of the Co-Commutator formula (\ref{cocom}), the Co-Associator formula (\ref{coas}) and the
Co-Skew symmetry formula (\ref{coskew}) based on  Definition \ref{def2}. These formulas will be axioms in the two definitions of vertex coalgebra that we will introduce later.
\begin{proposition}
Let $V$ be a vertex coalgebra as in Definition \ref{def2}. Then the following formulas hold:
\vskip.2cm
\noindent \rm{(1)}
\textit{Coefficient Co-Commutator formula:}
\begin{align}
\sum_{i \geq 0} &\binom{p}{i} (\Delta_{i} \otimes Id) \Delta_{p+q-i}=
 (Id \otimes \Delta_{q}) \Delta_{p} - (T \otimes Id)(Id \otimes \Delta_{p}) \Delta_{q}. \label{cococom}
\end{align}
\noindent \rm{(2)}
\textit{
Coefficient Co-Associator formula:}
\begin{align}
&(\Delta_{r} \otimes Id) \Delta_{q}= \nonumber \\
&\sum_{i \geq 0} (-1)^i \binom{r}{i} \bigg{[} (Id \otimes \Delta_{q+i}) \Delta_{r-i} - (-1)^r (T \otimes Id)(Id \otimes \Delta_{i}) \Delta_{q+r-i} \bigg{]}. \label{Cocoass}
\end{align}
\noindent \rm{(3)}
\textit{Coefficient Co-Skew symmetry:}
\begin{align}
T\Delta_{r} = \sum_{i \geq 0} (-1)^{r+1+i}  \Delta_{r+i} D^{\star{(i)}}.\label{cocoskew}
\end{align}
\end{proposition}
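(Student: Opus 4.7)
The plan is to derive each of the three coefficient formulas by extracting the appropriate monomial coefficient from the formal series identity proved earlier (the Co-Commutator formula \eqref{cocom}, the Co-Associator formula \eqref{coas}, and the Co-Skew symmetry formula \eqref{coskew}). In each case the left-hand side is written in terms of $\Delta_n$ using $\cx v = \sum_n \Delta_n(v) x^{-n-1}$, while the delta-function factors on the right-hand side are expanded in nonnegative powers of the second variable according to the paper's convention $(x\pm y)^n = \sum_{k\geq 0}\binom{n}{k} x^{n-k}(\pm y)^k$.

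For part (1), I would begin with \eqref{cocom} and take the coefficient of $x^{-p-1}y^{-q-1}$ on both sides. The right-hand side is immediate, producing $(Id\otimes\Delta_q)\Delta_p - (T\otimes Id)(Id\otimes\Delta_p)\Delta_q$. For the left-hand side, I expand $\delta(x-z,y) = \sum_{n\in\Zset,\,k\geq 0}\binom{n}{k}(-1)^k x^{n-k}z^k y^{-n-1}$, combine with $(\cz\otimes Id)\cy = \sum_{r,s}(\Delta_r\otimes Id)\Delta_s z^{-r-1}y^{-s-1}$, and take $Res_z$, which forces $k=r\geq 0$. Extracting the coefficient of $x^{-p-1}y^{-q-1}$ then leaves $\sum_{r\geq 0}\binom{r-p-1}{r}(-1)^r(\Delta_r\otimes Id)\Delta_{p+q-r}$. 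The key identity $\binom{r-p-1}{r}(-1)^r = \binom{p}{r}$ (valid for all $p\in\Zset$ and $r\geq 0$) converts this into the claimed formula \eqref{cococom}.

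Part (2) is entirely analogous, starting from \eqref{coas} and reading off the coefficient of $y^{-q-1}z^{-r-1}$. The left-hand side gives $(\Delta_r\otimes Id)\Delta_q$. For the first term on the right, I expand $\czy = \sum_{n,\,i\geq 0}\binom{-n-1}{i}\Delta_n z^{-n-1-i}y^i$ and use the identity $\binom{i-r-1}{i} = (-1)^i\binom{r}{i}$ to recover $\sum_{i\geq 0}(-1)^i\binom{r}{i}(Id\otimes\Delta_{q+i})\Delta_{r-i}$. For the second term, expand $\dmyxz = \sum_{n,\,k\geq 0}\binom{n}{k}(-1)^{n-k}y^{n-k}x^k z^{-n-1}$, combine with $(T\otimes Id)(Id\otimes\cx)\cy$, and use $Res_x$ to impose $k=p\geq 0$; collecting the coefficient yields the sign $(-1)^{r-p}$, which combined with the outer minus sign produces the $-(-1)^r(-1)^p\binom{r}{p}(T\otimes Id)(Id\otimes\Delta_p)\Delta_{q+r-p}$ that matches \eqref{Cocoass}.

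For part (3), I would start from \eqref{coskew}, write $\cmx e^{xD^{\star}}v = \sum_{n,\,i\geq 0}(-1)^{-n-1} x^{-n-1+i}\Delta_n D^{\star(i)}v$, and extract the coefficient of $x^{-r-1}$, which forces $n=r+i$ and produces precisely $\sum_{i\geq 0}(-1)^{r+1+i}\Delta_{r+i}D^{\star(i)}$, matching \eqref{cocoskew}. The main obstacle throughout is bookkeeping: tracking which variable carries the nonnegative power in each delta-function expansion, and correctly handling the binomial identities $\binom{r-p-1}{r}(-1)^r = \binom{p}{r}$ and $\binom{i-r-1}{i} = (-1)^i\binom{r}{i}$ that arise from the residue operations. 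Once these conversions are in place, each identity follows directly from the corresponding formal-variable statement in Propositions \ref{nothalf} and \ref{hubb2}.
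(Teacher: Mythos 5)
Your proposal is correct and follows the same route as the paper: the paper's proof simply asserts that \eqref{cococom}, \eqref{Cocoass} and \eqref{cocoskew} are the coefficient formulations of \eqref{cocom}, \eqref{coas} and \eqref{coskew}, and you have carried out exactly that coefficient extraction, with the binomial conversions $(-1)^r\binom{r-p-1}{r}=\binom{p}{r}$ and $\binom{i-r-1}{i}=(-1)^i\binom{r}{i}$ correctly handling the residues. Your version supplies the bookkeeping the paper leaves implicit, but it is the same argument.
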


\begin{proof}
The proof follows from the fact that Definition \ref{def1} and Definition \ref{def2} are equivalent. Formulas (\ref{cococom}), (\ref{Cocoass}) and (\ref{cocoskew}) are merely the coefficient formulation of formulas (\ref{cocom}), (\ref{coas}) and (\ref{coskew}) respectively.

\end{proof}

\section{Structure of the Co-Borcherds Identity.}

Following the idea presented in \cite{MN}, we want to prove that the Co-Borcherds identity can be deduced from the Co-Commutator and the Co-Associator formulas. For that reason, we introduce a few auxiliary formulas:
\begin{align*}
&CB_1(p,q,r):=\sum_{i \geq 0} \binom{p}{i} (\Delta_{r+i} \otimes Id) \Delta_{p+q-i},\\
&CB_2(p,q,r):=\sum_{i \geq 0} (-1)^i \binom{r}{i} (Id \otimes \Delta_{q+i}) \Delta_{p+r-i},\\
&CB_3(p,q,r):=\sum_{i \geq 0} (-1)^{i+r} \binom{r}{i}(T \otimes Id)(Id \otimes \Delta_{p+i}) \Delta_{q+r-i}.
\end{align*}
Note that with the notation introduced above, the Co-Borcherds identity (\ref{CoB}) for the indices $p,q,r \in \mathbb{Z}$ corresponds to
\begin{align}
CB_1(p,q,r)=CB_2(p,q,r)-CB_3(p,q,r).\label{aa}
\end{align}
Next, we have the following formulas which are analogous to the results obtained in Section 3.2 in \cite{MN}.
\begin{proposition}\label{pqr} For all $p,q,r \in \mathbb{Z}$
\begin{align*}
CB_i(p+1,q,r)=CB_i(p,q+1,r)+CB_i(p,q,r+1),
\end{align*}
for $i=1,2,3.$
\end{proposition}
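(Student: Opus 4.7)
The plan is to verify each of the three identities by a direct manipulation that uses nothing beyond Pascal's rule $\binom{n+1}{i}=\binom{n}{i}+\binom{n}{i-1}$ together with a single reindexing $i\mapsto i+1$ of the summation variable. The cases $i=1,2,3$ are structurally parallel but differ in which parameter ($p$ or $r$) sits inside the binomial coefficient, and in the presence of the alternating factors $(-1)^i$ and $(-1)^r$ in $CB_2$ and $CB_3$.

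For $CB_1$ I would expand the left-hand side $CB_1(p+1,q,r)$ by splitting $\binom{p+1}{i}=\binom{p}{i}+\binom{p}{i-1}$. The $\binom{p}{i}$ summand is immediately $CB_1(p,q+1,r)$, since $\Delta_{(p+1)+q-i}=\Delta_{p+(q+1)-i}$. In the $\binom{p}{i-1}$ summand the shift $i\mapsto i+1$ replaces $\binom{p}{i-1}$ by $\binom{p}{i}$ and $\Delta_{r+i}$ by $\Delta_{(r+1)+i}$, producing exactly $CB_1(p,q,r+1)$.

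For $CB_2$ and $CB_3$ the roles of $p$ and $r$ inside the binomial are swapped, so it is cleaner to start from the right-hand side and expand $\binom{r+1}{i}$ appearing in $CB_i(p,q,r+1)$. The $\binom{r}{i}$ piece reproduces $CB_i(p+1,q,r)$ (using $\Delta_{p+(r+1)-i}=\Delta_{(p+1)+r-i}$), while the $\binom{r}{i-1}$ piece, after the shift $i\mapsto i+1$, yields $-CB_i(p,q+1,r)$; the extra minus sign is produced by the factor $(-1)^i$ under the reindexing. Rearranging gives the claim. In the $CB_3$ case there is also the global sign $(-1)^r$: when $r$ is incremented it flips to $(-1)^{r+1}$, but this is exactly compensated by the sign flip created by the $i\mapsto i+1$ shift, so the resulting identity again has the $+$ structure stated in the proposition.

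I do not foresee any substantive obstacle: the claim is a purely combinatorial statement about the index pattern of the Co-Borcherds series and uses no property of the coproducts $\Delta_n$ beyond linearity. The only thing that needs careful bookkeeping is the signs and the choice of whether Pascal's rule is applied to $\binom{p+1}{i}$ (for $CB_1$) or to $\binom{r+1}{i}$ (for $CB_2$ and $CB_3$), since the variable which appears in the binomial is the one that must be incremented to trigger Pascal's identity in a single step.
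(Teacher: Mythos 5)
Your proposal is correct and is essentially the paper's own argument: all three cases reduce to Pascal's rule applied to the binomial coefficient whose upper index is being incremented, together with a one-step reindexing $i\mapsto i+1$, the paper merely running the same computation from the right-hand side $CB_i(p,q+1,r)+CB_i(p,q,r+1)$ in each case. One bookkeeping slip worth fixing: in $CB_3$ it is $\Delta_{q+r-i}$ (not $\Delta_{p+r-i}$) that carries the $r$, so the $\binom{r}{i}$ piece of $CB_3(p,q,r+1)$ equals $-CB_3(p,q+1,r)$ while the reindexed $\binom{r}{i-1}$ piece equals $+CB_3(p+1,q,r)$ --- the roles of the two pieces are interchanged relative to the $CB_2$ case --- but the rearranged identity is the same, so the conclusion stands.
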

\begin{proof}
On the one hand,
\begin{align*}
CB_1(p+1,q,r)=\sum_{i \geq 0} &\binom{p+1}{i} (\Delta_{r+i} \otimes Id) \Delta_{p+1+q-i}.
\end{align*}
On the other hand,
\begin{align}
&CB_1(p,q+1,r)+CB_1(p,q,r+1) \nonumber \\
&=\sum_{i \geq 0} \binom{p}{i} (\Delta_{r+i} \tt Id) \Delta_{p+1+q-i} + \sum_{i \geq 0} \binom{p}{i}(\Delta_{r+1+i} \tt Id) \Delta_{p+q-i}
.\label{ohgod}
\end{align}
Rewriting (\ref{ohgod}) we have
\begin{align}
(\Delta_r \tt 	Id) \	\Delta_{p+1+q} + \sum_{i \geq 0} \left[\binom{p}{i+1}+ \binom{p}{i} \right] (\Delta_{r+i+1} \tt Id ) \Delta_{p+q-i}. \label{aiai}
\end{align}
Using the fact that $\binom{p}{i+1}+ \binom{p}{i}=\binom{p+1}{i+1}$ for all $i \geq 0$ we can rewrite (\ref{aiai}) as
\begin{align*}
&(\Delta_r \tt 	Id) \	\Delta_{p+1+q} + \sum_{i \geq 0}\binom{p+1}{i+1} (\Delta_{r+i+1} \tt Id ) \Delta_{p+q-i}\\
&=\sum_{i \geq 0} \binom{p+1}{i} (\Delta_{r+i} \otimes Id) \Delta_{p+1+q-i},
\end{align*}
which is exactly $CB_1(p+1,q,r).$

Analogously, we rewrite
\begin{align*}
CB_2&(p,q+1,r)+ CB_2(p,q,r+1)=
\sum_{i \geq 0} (-1)^i \binom{r}{i} (Id \otimes \Delta_{q+1+i}) \Delta_{p+r-i}\ \nonumber \\
&\ \ \ \ \ \ \ \ \ \ \ +(Id \tt \Delta_q) \Delta_{p+r+1} + \sum_{i \geq 1} (-1)^i \binom{r+1}{i} (Id \otimes \Delta_{q+i}) \Delta_{p+r+1-i}\nonumber\\
&=\sum_{i \geq 1} (-1)^{i-1}\binom{r}{i-1}(Id \tt \T_{q+i})\T_{p+r-i+1} + (Id \tt \T_q) \T_{p+r+1} \   \nonumber \\
&\ \ \ \ \ \ \ \ \ \ \  + \sum_{i \geq 1} (-1)^i \binom{r+1}{i}(Id \tt \T_{q+i})\T_{p+r+1-i} \nonumber \\
&=\sum_{i \geq 1} \bigg{[} (-1)\binom{r}{i-1}+ \binom{r+1}{i} \bigg{]} (-1)^i (Id \tt \T_{q+i}) \T_{p+r-i+1} + (Id \tt \T_q)\T_{p+r+1}  \nonumber \\
&=\sum _{i \geq 0} \binom{r}{i}(-1)^i (Id \tt \T_{q+i}) \T_{p+r-i+1}= CB_2(p+1,q,r),
\end{align*}
where we have used that $(-1)\binom{r}{i-1}+\binom{r+1}{i}=\binom{r}{i}$ for all $i \geq 1$.

Finally, we rewrite
\begin{align}
CB&_3(p,q+1,r) + CB_3(p,q,r+1)= \nonumber \\
&\sum_{i\geq 0} (-1)^{r+i} \bigg{[} \binom{r}{i}-\binom{r+1}{i}\bigg{]}(T \tt Id)(Id \tt \T_{p+i}) \T_{q+r-i+1}. \label{espalda}
\end{align}
Using that $\binom{r}{0}=1$ for all $r \in \mathbb{Z}$, and the fact that $\binom{r}{i}- \binom{r+1}{i}= -\binom{r}{i-1}$ for every $i \geq 1$, the right hand side of (\ref{espalda}) equals
\begin{align}
\sum_{i \geq 1} (-1)^{r+i+1} \binom{r}{i-1}(T \tt Id)(Id \tt \T_{p+i}) \T_{q+r-i+1}.\label{timo}
\end{align}
Rewriting (\ref{timo}) we obtain,
\begin{align*}
\sum_{i \geq 0} (-1)^{r+i} \binom{r}{i}(T \tt Id)(Id \tt \T_{p+i+1})\T_{q+r-i}=CB_3(p+1,q,r),
\end{align*}
finishing the proof.

\end{proof}
Using (\ref{aa}) and Proposition \ref{pqr}, it is easy to prove the following result:
\begin{proposition} \label{microdancing} The Co-Borcherds identity for two of the indices $(p+1,q,r), (p,q+1,r), \\
(p,q,r+1)$ imply the Co-Borcherds identity for the other index.
\end{proposition}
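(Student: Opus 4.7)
The plan is to package the three sums $CB_1, CB_2, CB_3$ into a single linear combination and observe that Proposition \ref{pqr} makes that combination satisfy the same linear relation in the index triple $(p,q,r)$.

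First I would introduce the shorthand $G(p,q,r) := CB_1(p,q,r) - CB_2(p,q,r) + CB_3(p,q,r)$. By the reformulation (\ref{aa}), the Co-Borcherds identity at a given triple $(p,q,r)$ is equivalent to the single equation $G(p,q,r) = 0$. So the proposition reduces to the purely formal statement that vanishing of $G$ at two of the triples $(p+1,q,r)$, $(p,q+1,r)$, $(p,q,r+1)$ forces it at the third.

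Next, I would apply Proposition \ref{pqr} termwise. Since that proposition gives, for each $i \in \{1,2,3\}$,
\begin{equation*}
CB_i(p+1,q,r) = CB_i(p,q+1,r) + CB_i(p,q,r+1),
\end{equation*}
taking the alternating sum $CB_1 - CB_2 + CB_3$ preserves the identity and yields
\begin{equation*}
G(p+1,q,r) = G(p,q+1,r) + G(p,q,r+1).
\end{equation*}

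Finally I would read off the three implications. If $G$ vanishes on any two of the triples on the right-hand side and on the left-hand side of this linear relation, then the remaining value of $G$ is forced to be zero by simple algebra: $G(p+1,q,r) = 0$ follows from $G(p,q+1,r)=G(p,q,r+1)=0$; while $G(p,q+1,r) = G(p+1,q,r) - G(p,q,r+1)$ and $G(p,q,r+1) = G(p+1,q,r) - G(p,q+1,r)$ cover the other two cases. There is no real obstacle here since the content of the proof is entirely absorbed into Proposition \ref{pqr}; the only thing to be careful about is that the relation $CB_1 = CB_2 - CB_3$ is linear, so it combines consistently across the three arguments.
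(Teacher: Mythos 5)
Your proposal is correct and matches the paper's intended argument exactly: the paper leaves this proposition as an immediate consequence of the reformulation (\ref{aa}) and Proposition \ref{pqr}, which is precisely the linear-combination argument you spell out. Nothing further is needed.
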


\noindent Now, we can prove the following statement:
\begin{theorem} \label{cesar}
The Co-Borcherds identity for all $p$ and $q$ with fixed $r$ and for all $q$ and $r$ with fixed $p$ imply the Co-Borcherds identity for all $p,q$ and $r$.
\end{theorem}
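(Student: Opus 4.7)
The strategy is to use \prref{microdancing} as a triangulation engine: the Co-Borcherds identity at any two of $(p+1,q,r)$, $(p,q+1,r)$, $(p,q,r+1)$ implies it at the third. I will propagate the two given ``planes of validity''---namely $r = r_0$ (with $p,q$ free) and $p = p_0$ (with $q,r$ free)---outward to cover all of $\mathbb{Z}^3$.

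First, I extend each hypothesis plane in its transverse coordinate. For the plane $r = r_0$, upward induction on $r - r_0$ gives the Co-Borcherds identity on the half-space $\{r \geq r_0\}$: for $r > r_0$ and arbitrary $p,q$, \prref{microdancing} applied to the triple $(p+1,q,r-1)$, $(p,q+1,r-1)$, $(p,q,r)$ yields the identity at $(p,q,r)$ from its two predecessors on the plane $r' = r - 1$, which are known by the inductive hypothesis. The symmetric argument for the plane $p = p_0$, using the triple $(p+1,q,r)$, $(p,q+1,r)$, $(p,q,r+1)$ to deduce the first member, gives the identity on $\{p \geq p_0\}$.

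It remains to fill in the region $\Omega := \{(p,q,r) : p < p_0 \text{ and } r < r_0\}$. Define $d(p,r) := (p_0 - p) + (r_0 - r) \geq 2$ and induct on $d$. For $(p,q,r) \in \Omega$, apply \prref{microdancing} to the triple $(p+1, q-1, r)$, $(p, q, r)$, $(p, q-1, r+1)$ (obtained by taking $(a,b,c) = (p, q-1, r)$). The point $(p+1, q-1, r)$ either has $p$-coordinate equal to $p_0$ (when $p = p_0 - 1$, in which case it is known by hypothesis) or lies in $\Omega$ with $d$-value $d - 1$ (so it is known by the inductive hypothesis); the point $(p, q-1, r+1)$ is handled symmetrically using the plane $r = r_0$. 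Hence the identity holds at $(p,q,r)$. The base case $d = 2$, that is $(p,r) = (p_0 - 1, r_0 - 1)$, is immediate since both non-target triple members land directly on hypothesis planes.

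The main obstacle I anticipate is choosing the correct triple in the last step: the hypothesis planes only propagate toward \emph{increasing} $p$ or $r$ via \prref{microdancing}, so the ``lower-left'' region $\Omega$ is covered only by a triple whose two non-target members move \emph{toward} the hypothesis planes in $p$ and $r$---precisely the triple with those members at $(p+1, q-1, r)$ and $(p, q-1, r+1)$. Once this choice is made, the double cascade (phases one and two outward, phase three along decreasing $d$) is routine.
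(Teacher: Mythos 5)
Your proposal is correct and follows essentially the same route as the paper: both first propagate the two hypothesis planes to the half-spaces $\{r\ge r_0\}$ and $\{p\ge p_0\}$ by upward induction via Proposition~\ref{microdancing}, and then fill the remaining quadrant $\{p<p_0,\ r<r_0\}$ using exactly the triple whose two non-target members move toward the hypothesis planes. The only difference is bookkeeping: you organize the quadrant by induction on $(p_0-p)+(r_0-r)$, whereas the paper enlarges the union of half-spaces one unit at a time via a diagonal sweep; both are valid realizations of the same argument.
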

\begin{proof}
We assume the Co-Borcherds identity for all $p$ and $q$ with $r_0$ fixed and for all $q$ and $r$ with $p_0$ fixed.
Using Proposition \ref{microdancing} we have
that the Co-Borcherds identity also holds for $(p,q,r_0+1)$ for all $p,q \in \mathbb{Z}$.
Using that the Co-Borcherds identity holds for indices $(p,q+1,r_0+1)$ and $(p+1,q , r_0+1)$ and Proposition \ref{microdancing} we obtain that the Co-Borcherds identity also holds for the indices $(p,q,r_0+2)$ for all $p,q \in \mathbb{Z}$. Inductively we obtain that the Co-Borcherds identity holds for $(p,q,r_0+n)$ for all $n \in \mathbb{N}$ for every $p,q \in \mathbb{Z}$ .

Analogously, we can prove that the Co-Borcherds identity also holds for $(p_0+1,q,r) $ for all $q,r, \in \mathbb{Z}$. Inductively we obtain that the Co-Borcherds identity holds for the indices $(p_0+n,q,r)$ for all $n \in \mathbb{N}$ for every $q,r \in \mathbb{Z}$ .
Thus, it is clear that the Co-Borcherds identity holds for $(p,q,r)$ as long as $p \geq p_0$ or $r \geq r_0$.

Now, we need to analyze the case in which $p <p_0$ and $r< r_0$.
First, we note that the Co-Borcherds identity holds for indices $(p_0, s, r_0-1)$ and $(p_0 - 1, s, r_0)$ for all $s \in \mathbb{Z}$. Using Proposition \ref{microdancing} those identities imply the Co-Borcherds identity for the index $(p_0-1,s+1,r_0-1)$ for all $s \in \mathbb{Z}$. Again, using that the Co-Borcherds identity holds for indices $(p_0-1,s+1, r_0-1)$ and $(p_0-2, s+1, r_0)$ we obtain that Co-Borcherds identity holds for the index $(p_0-2, s +2, r_0-1)$. Inductively, we obtain that the Co-Borcherds identity holds for indices $(p_0-n, s+n, r_0-1)$ for all $n \geq 0$. As this identity holds for all $s \in \mathbb{Z}$ we obtain that the Co-Borcherds identity holds for indices $(p_0-n, s, r_0-1)$ for all $n \geq 0$ and $s \in \mathbb{Z}$.

Next, the fact that the Co-Borcherds identity holds for indices $(p_0-1, l, r_0 -1)$ and $(p_0 , l , r_0 -2)$ for all $l \in \mathbb{Z}$ implies that the Co-Borcherds identity holds for the index $(p_0-1,l,r_0-2)$ for all $l \in \mathbb{Z}$.
Inductively, we obtain that the Co-Borcherds identity holds for indices $(p_0-1, s ,r_0-n)$ for all $n \geq 0$ for every $s \in \mathbb{Z}$.
Thus, we have proved that if the Co-Borcherds identity holds for indices $(p,q,r)$ as long as $p \geq p_0$ or $r \geq r_0$, then it also holds for $(p,q,r)$ as long as $p \geq p_0-1$ or $r \geq r_0-1$. Continuing with this procedure, we inductively obtain that the Co-Borcherds identity holds for every indices $(p,q,r)$, finishing the proof.

\end{proof}

\section{Two equivalent definitions}
After studying the structure of the Co-Borcherds identity, we will prove that we can reformulate the definition of a vertex coalgebra focusing on either the Co-Commutator formula or the Co-Associator formula.
\begin{definition} \label{def3}
A \textit{vertex coalgebra} consists of a $\mathbb{C}$-vector space $V$, together with linear maps
\begin{align*}
\cx: &V \longrightarrow (V \otimes V)((x^{-1})) \\
c: &V\longrightarrow \mathbb{C} \\
\D: &V\longrightarrow V
\end{align*}
satisfying the following axioms:
\vskip.2cm
\noindent $\bullet$ Left counit:
\begin{equation*} %\label{covac3}
  (c \otimes Id)\cx v=v, \  \textrm{ for all $v \in V$.}
\end{equation*}
\vskip.2cm
\noindent $\bullet$ Cocreation:
\begin{align} \label{covac31}
e^{x \D}= (Id \tt c) \cx.
\end{align}
\vskip.2cm
\noindent $\bullet$ Co-Skew symmetry:
\begin{align} \label{coskew3}
T\cx=\cmx e^{x\D}.
\end{align}
\noindent $\bullet$ $\D$ formula:
\begin{align} \label{dolor}
\ddx \cx=  (\D \tt Id) \cx.
\end{align}
\vskip.2cm
\noindent $\bullet$ Co-Commutator formula:
\begin{align}
Res_z \ \dxzy (\cz &\otimes Id)\cy=  \nonumber\\
&(Id \otimes \cy)\cx- (T \otimes Id)(Id \otimes \cx) \cy . \label{cocom3}
\end{align}
\end{definition}
\vskip.2cm
\begin{remarks} \label{desd}
(a) Note that (\ref{covac31}) trivially implies (\ref{covac2}). That is, Definition \ref{def3} implies that for all $v \in V$
\begin{align*}
(Id &\otimes c)\cx v \in V[[x]] \ \ \ \ \textrm{ and } \ \ \ \ (Id \otimes c)\cx v|_{x=0}=v.
\end{align*}
\vskip.2cm
\noindent (b) Note that (\ref{covac31}) implies that the map $\D$ is the map defined in (\ref{Ddef}).
In fact, if we take the coefficient of $x^{1}$ in (\ref{covac31}), we get $\D=(Id \tt c) \T_{-2}$.
\vskip.2cm
\noindent (c) Now, using Proposition \ref{hubb2} (3), it is clear that (\ref{covac31}) is also satisfied under the conditions of Definition \ref{def1}.

\end{remarks}
Hence, $\D$ is the map we introduced earlier and the conditions regarding the map $c$ do not essentially differ from the properties described in the first definitions. We can now state one of our main results:
\vskip.5cm
\begin{theorem}\label{sozinha} Definition \ref{def3} is equivalent to Definition \ref{def1}.
\end{theorem}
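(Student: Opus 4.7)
The plan is to prove both implications. The forward direction, Definition~\ref{def1} $\Rightarrow$ Definition~\ref{def3}, is essentially an immediate consequence of results already established in Section~2: the truncation (\ref{trun}) gives the codomain $(V\otimes V)((x^{-1}))$ required by Definition~\ref{def3}; the left counit is identical in both; the new Cocreation (\ref{covac31}) is Proposition~\ref{hubb2}(3) after identifying $\D$ with the operator defined in (\ref{Ddef}); Co-Skew symmetry (\ref{coskew3}), the $\D$ formula (\ref{dolor}) and the Co-Commutator formula (\ref{cocom3}) are Propositions~\ref{hubb2}(4), \ref{hubb2}(1) and \ref{nothalf}(1) respectively.

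For the reverse direction, Definition~\ref{def3} $\Rightarrow$ Definition~\ref{def1}, the truncation is built into the codomain of $\cx$, the left counit is unchanged, and the original Cocreation (\ref{covac2}) follows from (\ref{covac31}) by Remark~\ref{desd}(a). The substance of the proof is therefore to recover the Co-Jacobi identity (\ref{CoJ}). My plan is to pass to the coefficient formulation (Definition~\ref{def2}) and establish the Co-Borcherds identity (\ref{CoB}). By Theorem~\ref{cesar}, it suffices to verify Co-Borcherds for all $p,q$ with one fixed value of $r$ and for all $q,r$ with one fixed value of $p$. The coefficient Co-Commutator (\ref{cococom}), which is precisely the coefficient version of axiom (\ref{cocom3}), coincides with Co-Borcherds at $r=0$, which handles the first family.

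The heart of the proof, and the main obstacle, is to obtain the coefficient Co-Associator (\ref{Cocoass})---equivalently, Co-Borcherds with $p=0$---from the Co-Commutator, Co-Skew symmetry and $\D$ formula of Definition~\ref{def3}. My intended strategy, adapted from the standard vertex-algebra derivation of the associator from the commutator and skew symmetry, is to use the coefficient Co-Skew symmetry (\ref{cocoskew}) to convert $T\Delta$-terms appearing after suitable manipulation of (\ref{cococom}) into sums $\sum_{j\geq 0}(-1)^{r+1+j}\Delta_{r+j}D^{\star(j)}$, then invoke the consequence $(D^{\star(j)}\otimes Id)\Delta_m=(-1)^j\binom{m}{j}\Delta_{m-j}$ of (\ref{coder}) to rewrite the compositions of $\Delta$'s with $D^{\star(j)}$ as shifted $\Delta$'s, and finally reorganise the resulting double sum via a Vandermonde-type binomial identity to collapse it into the single sum appearing in (\ref{Cocoass}). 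The delicate point is the combinatorial bookkeeping of binomial coefficients, signs and tensor-slot transpositions needed for this collapse. Once (\ref{Cocoass}) is established, Theorem~\ref{cesar} applied with $r_0=0$ and $p_0=0$ yields the full Co-Borcherds identity, hence the Co-Jacobi identity, completing the proof.
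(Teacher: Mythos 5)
Your overall architecture is exactly the paper's: the forward direction by citing Propositions~\ref{hubb2} and~\ref{nothalf}, and the reverse direction by reducing to the Co-Borcherds identity via Theorem~\ref{cesar}, with the Co-Commutator supplying the family $r_0=0$ and the Co-Associator supplying the family $p_0=0$. The problem is that the one step carrying all the mathematical content --- deriving the Co-Associator formula from Co-Creation, the $\D$ formula, Co-Skew symmetry and the Co-Commutator formula (the paper's Lemma~\ref{under}) --- is not actually carried out in your proposal. You describe an ``intended strategy'' at the coefficient level and explicitly defer ``the combinatorial bookkeeping of binomial coefficients, signs and tensor-slot transpositions,'' which is precisely where the proof lives. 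As written this is a plan for a proof of the key lemma, not a proof of it.

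Moreover, the sketch of that plan is missing an ingredient. Coefficient Co-Skew symmetry (\ref{cocoskew}) produces terms of the form $\Delta_{r+i}D^{\star(i)}$, where $D^{\star(i)}$ acts on the \emph{argument} of the coproduct, whereas the identity you invoke, $(D^{\star(j)}\otimes Id)\Delta_m=(-1)^j\binom{m}{j}\Delta_{m-j}$ from (\ref{coder}), only governs $D^{\star}$ acting on the \emph{first output slot}. To move the $D^{\star(i)}$'s arising from skew symmetry past the outer coproducts you also need the $\D$-bracket formula (\ref{12}) (equivalently the conjugation formula (\ref{13})), i.e.\ Proposition~\ref{preta}, which is itself a consequence of Co-Skew symmetry and the $\D$ formula and must be established first. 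The paper sidesteps the coefficient bookkeeping entirely by working with generating functions: in Lemma~\ref{under} one interchanges $x$ and $z$ in (\ref{cocom3}), applies $(Id\otimes T)$ and the transposition identities, substitutes Co-Skew symmetry $T\cy=\cmy e^{y\D}$, uses the conjugation formula to rewrite $(Id\otimes e^{y\D})\cz$ as $\czmy e^{y\D}$, multiplies by $e^{-y\D}$, and replaces $-y$ by $y$. If you want to keep your coefficient-level route you must both add Proposition~\ref{preta} to your toolkit and actually execute the Vandermonde collapse; otherwise the formal-variable argument is the cleaner way to close the gap.
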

\vskip.2cm
In \cite{H} (cf. Proposition 2.3), it is proved that Definition \ref{def1} implies Cocreation (\ref{ds3}), Co-Skew symmetry (\ref{coskew}), and the $\D$ formula (\ref{ds1}). In Proposition \ref{nothalf} (1) we proved that the Co-Commutator formula (\ref{cocom3}) follows from Definition \ref{def1}. Therefore, it is clear that the conditions of Definition \ref{def3}
are satisfied under the axioms of Definition \ref{def1}.
In order to prove the other implication, we first need to prove some results.
\vskip.3cm
\begin{proposition} \label{preta} The Co-Skew symmetry formula (\ref{coskew3}) and the $\D$ formula (\ref{dolor}) imply that the map $\D$ satisfies the following properties:
\vskip.2cm
\noindent \rm{(1)} $\D$\textit{-bracket formula:}
\begin{align}
&\ddx \cx= \cx D^{\star} - (Id \otimes D^{\star}) \cx \label{12}
\end{align}
\noindent \rm{(2)} \textit{Conjugation formula:}
\begin{align}
&(Id \tt e^{-zD^{\star}}) \cx e^{zD^{\star}}= \cxz \label{13}
\end{align}
\end{proposition}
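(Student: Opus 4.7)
The plan is to prove (1) by using Co-Skew symmetry (\ref{coskew3}) to transfer the $\D$ formula (\ref{dolor}) from the first tensor factor to the second, and then to derive (2) from (1) via an exponentiation identity for a pair of commuting operators, combined with Taylor's theorem (\ref{Taylor}).

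For (1), I would first apply $T$ to both sides of $\ddx \cx = (\D \tt Id) \cx$. Since $T$ commutes with $\ddx$ and satisfies $T(\D \tt Id) = (Id \tt \D) T$, this yields $\ddx(T\cx) = (Id \tt \D)(T\cx)$. Substituting the Co-Skew symmetry relation $T\cx = \cmx e^{x\D}$ and applying the Leibniz rule produces $(\ddx \cmx) e^{x\D} + \cmx \D e^{x\D} = (Id \tt \D)\cmx \, e^{x\D}$. Right-composing with the invertible operator $e^{-x\D}$ cancels the common factor and gives $\ddx \cmx = (Id \tt \D)\cmx - \cmx \D$. Finally, substituting $x \mapsto -x$ and using the straightforward formal-series identity $\ddx \cmx = -(\ddx \cx)|_{x \to -x}$ yields the $\D$-bracket formula (\ref{12}).

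For (2), I would introduce two operators on the space of linear maps $\phi: V \to V \tt V$: right-composition $L\phi := \phi \D$ and post-composition $R\phi := (Id \tt \D)\phi$. These commute because they act independently on the source and the target of $\phi$. By (1), $\ddx \cx = (L - R)\cx$, and since $\ddx$ also commutes with both $L$ and $R$, induction gives $(\ddx)^n \cx = (L - R)^n \cx$ for every $n \geq 0$. Exponentiating and invoking the commutativity of $L$ and $R$, we obtain $e^{z\ddx}\cx = e^{z(L-R)}\cx = e^{zL} e^{-zR}\cx = (Id \tt e^{-z\D}) \cx \, e^{z\D}$. Taylor's theorem (\ref{Taylor}) identifies the left-hand side with $\cxz$, which is exactly the conjugation formula (\ref{13}).

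The only real subtlety is the sign bookkeeping in the $x \mapsto -x$ substitution of part (1); once (1) is established, (2) reduces to a formal exponential identity that follows immediately from the commutativity of $L$ and $R$, so no further obstacle remains.
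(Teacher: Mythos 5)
Your proposal is correct and follows essentially the same route as the paper: part (1) is the same computation combining Co-Skew symmetry, the product rule, the identity $T(\D\tt Id)=(Id\tt\D)T$ and the sign flip under $x\mapsto -x$ (the paper just starts from $\cx=T(\cmx e^{x\D})$ and differentiates rather than first applying $T$ to the $\D$ formula), and part (2) is exactly the paper's one-line ``exponentiate the $\D$-bracket formula and apply Taylor's theorem,'' which you merely spell out via the commuting left/right composition operators. No gaps; your version just makes explicit the details the paper leaves implicit.
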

\vskip.2cm
\begin{proof}
(1) Using (\ref{coskew3}), the product rule, (\ref{dolor}) and finally reapplying (\ref{coskew3}) we have
\begin{align}
\ddx \cx&= \ddx \left( T\cmx e^{x \D} \right) \nonumber \\
&= T\left(\ddx \cmx\right)e^{x \D}+ T \cmx \ddx \left( e^{x \D}\right) \nonumber \\
&= -T (\D \tt Id) \cmx e^{x \D} + T \cmx e^{x \D} \D\nonumber \\
&= -(Id \tt \D) \cx + \cx \D. \nonumber
\end{align}
\vskip.2cm
\noindent(2) Exponentiating the $\D$-bracket formula (\ref{12}) and applying Taylor's Theorem (\ref{Taylor}) we obtain (\ref{13}).

\end{proof}
Our goal is to prove that the axioms of Definition \ref{def3} imply the Co-Jacobi identity (\ref{CoJ}). We begin by proving that the Co-Associator formula (\ref{coas}) holds.
\begin{lemma}\label{under}
In the presence of Cocreation (\ref{covac31}) and the $\D$ formula (\ref{dolor}), the Co-Skew symmetry formula (\ref{coskew3}) and the Co-Commutator formula (\ref{cocom3}) imply the Co-Associator formula (\ref{coas}).
\end{lemma}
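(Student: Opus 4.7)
The plan is to derive the Co-Associator formula by applying a suitable residue operation to the Co-Commutator formula, with the delta-function identity (\ref{d2}) and Co-Skew symmetry supplying the adjustment needed to pass between the two formulations.

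First, I would apply $Res_x\, \dxyz$ to both sides of the Co-Commutator formula (\ref{cocom3}). A direct residue computation, using that $\dxyz = \delta(z+y,x)$ by (\ref{d1}) together with the expansion conventions on the formal deltas, gives $Res_x\,\dxyz\,\cx\,v = \czy\,v$ (and hence $Res_x\,\dxyz\,(Id\tt\cy)\cx = (Id\tt\cy)\czy$, likewise for the $(T\tt Id)$ term). Thus the right-hand side of Co-Commutator is transformed into $(Id\tt\cy)\czy - Res_x\,\dxyz\,(T\tt Id)(Id\tt\cx)\cy$. Splitting the remaining residue via (\ref{d2}) in the form $\dxyz = \dmyxz + \dxzy$ gives
\begin{equation*}
Res_x\,\dxyz\,(T\tt Id)(Id\tt\cx)\cy = Res_x\,\dmyxz\,(T\tt Id)(Id\tt\cx)\cy + Res_x\,\dxzy\,(T\tt Id)(Id\tt\cx)\cy,
\end{equation*}
where the first piece is exactly the term appearing in the Co-Associator formula (\ref{coas}).

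For the left-hand side, we have $Res_x\,\dxyz\,Res_w\,\dxwy\,(\cw\tt Id)\cy$. After swapping the two residues, the inner $Res_x$ acts on the product of delta functions $\dxyz\,\dxwy$, which (because of the expansion conventions) concentrates on the negative powers of $z$ only; in particular, it does not by itself recover the full series $(\cz\tt Id)\cy$. Putting these two computations together, proving Co-Associator is equivalent to establishing the identity
\begin{equation*}
Res_x\,\dxyz\,Res_w\,\dxwy\,(\cw\tt Id)\cy \;+\; Res_x\,\dxzy\,(T\tt Id)(Id\tt\cx)\cy \;=\; (\cz\tt Id)\cy. \tag{$\dagger$}
\end{equation*}

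The identity $(\dagger)$ is where Co-Skew symmetry (\ref{coskew3}) and the $\D$ formula (\ref{dolor}) enter. Using $T\cx = \cmx e^{x\D}$ together with the conjugation formula (\ref{13}) of Proposition \ref{preta} and Taylor's theorem (\ref{Taylor}), the second summand of $(\dagger)$ can be rewritten in a form that depends on $\czy$ after a shift; simultaneously, applying Co-Skew symmetry $T\cw = \cmw e^{w\D}$ inside the first summand (combined with $(e^{w\D}\tt Id)\cy = \cwy$) produces a matching structure $(\cmw\tt Id)\cwy$. The two pieces then combine, via the delta-function identities (\ref{d1})--(\ref{hub1}), to $(T\tt Id)(\cmz\tt Id)\czy$, which by Co-Skew symmetry applied once more equals $(\cz\tt Id)\cy$. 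The main obstacle is the bookkeeping of the formal expansion conventions (nonnegative powers of $y$, $z$, or $x$ in the various deltas) and the justification of swapping residues; these are valid thanks to the truncation built into the target space $(V\tt V)((x^{-1}))$ of $\cx$, which ensures all the relevant formal sums are well defined. The essential role of Co-Skew symmetry here is to supply the "positive $z$-power" contribution to $(\cz\tt Id)\cy$ that the Co-Commutator formula on its own fails to capture.
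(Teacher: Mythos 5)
There is a genuine gap, and it appears already at your first step. The operation $Res_x\,\dxyz(\,\cdot\,)$ is not well defined on the term $(T\tt Id)(Id\tt\cx)\cy$ (and hence not on either side of the Co-Commutator formula). Concretely, the coefficient of $z^{-r-1}y^{-q-1}$ in $Res_x\,\dxyz\,(T\tt Id)(Id\tt\cx)\cy\,v$ is
\begin{equation*}
\sum_{k\ge 0}(-1)^k\binom{r}{k}\,(T\tt Id)(Id\tt\Delta_{r-k})\Delta_{q+k}\,v .
\end{equation*}
As $k\to\infty$ the outer index $q+k\to+\infty$ (not controlled by truncation of $v$) while the inner index $r-k\to-\infty$ is applied to the components of $\Delta_{q+k}v$, which vary with $k$, so truncation provides no uniform bound and each coefficient with $r<0$ is a genuinely infinite sum. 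The same defect afflicts the double residue on the left-hand side and the term $Res_x\,\dxzy\,(T\tt Id)(Id\tt\cx)\cy$ appearing in your identity $(\dagger)$; only the $\dmyxz$ piece is controlled by truncation (there the inner index stays $\ge 0$ and the outer index decreases). So the splitting of $\dxyz$ via (\ref{d2}) manipulates undefined objects, and your assertion that the truncation of $\cx$ into $(V\tt V)((x^{-1}))$ ``ensures all the relevant formal sums are well defined'' is exactly the point that fails: the composite $(Id\tt\cx)\cy$ has no uniform truncation in $x$ across the $y$-coefficients. (This is the dual of the familiar fact for vertex algebras that one cannot freely substitute $x=z+y$ into a product of two vertex operators, and it is the reason the paper's Section 3 works coefficientwise.) Independently of this, the identity $(\dagger)$ is where all the content of the lemma resides, and your argument for it is a sketch rather than a proof: the claim that the two summands ``combine \dots to $(T\tt Id)(\cmz\tt Id)\czy$'' is precisely the computation that would need to be carried out, and no derivation is given.

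For contrast, the paper's proof never residuates against $\dxyz$ or $\dxzy$: it interchanges $x$ and $z$ in the Co-Commutator formula, conjugates by $(Id\tt T)$, uses Co-Skew symmetry (\ref{coskew3}) together with the conjugation formula (\ref{13}) to replace every occurrence of $T\cy$ by $\cmy e^{y\D}$, cancels the invertible factor $e^{y\D}$, and substitutes $y\mapsto -y$; each step is a manipulation of well-defined formal series. If you wish to keep a residue-based route, you would first need to pass to coefficients and invoke the Co-Borcherds bookkeeping of Theorem \ref{cesar}, where all sums are finite --- but that is the paper's global strategy, not a proof of this lemma.
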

\begin{proof}
Interchanging the variables $z$ and $x$ in (\ref{cocom3}) we obtain
\begin{align}
(Id \tt \cy) \cz-(T \tt Id)(Id \tt \cz) \cy=Res_x \  \dzxy (\cx \tt Id) \cy. \label{cabeza}
\end{align}
Using that
\begin{align*}
(Id \tt T)(Id \tt T \cy) \cz = (Id \tt \cy) \cz,
\end{align*}
\begin{align*}
(Id \tt T) (\cz \tt Id) T \cy= (T \tt Id)(Id \tt \cz) \cy,
\end{align*}
and
\begin{align*}
(Id \tt T)(T \tt Id)(Id \tt \cx) T \cy= (\cx \tt Id) \cy,
\end{align*}
we have that (\ref{cabeza}) can be rewritten to obtain
\begin{align}
(Id \tt T)(Id \otimes T\cy)&\cz-(Id \tt T)(\cz \otimes Id)T \cy \nonumber \\
&=Res_x \  \dzxy (Id \tt T)(T \otimes Id)(Id \otimes \cx) T\cy. \label{cualquer}
\end{align}
We apply $(Id \tt T)$ to (\ref{cualquer}) and use that $\dzxy = \delta(y+x,z)$ to get
\begin{align}
(Id \otimes T\cy)\cz-&(\cz \otimes Id)T \cy \nonumber \\
&=Res_x \  \delta(y+x,z) (T \otimes Id)(Id \otimes \cx) T\cy. \label{sinta}
\end{align}
Next, using Co-Skew symmetry formula (\ref{coskew3}) we can rewrite (\ref{sinta}) as follows
\begin{align}
&(\cz \otimes Id)T \cy = \nonumber \\
&(Id \otimes \cmy)(Id \tt e^{y \D})\cz - Res_x \  \delta(y+x,z) (T \otimes Id)(Id \otimes \cx) T\cy. \label{por}
\end{align}
Using (\ref{13}) and (\ref{coskew3}) we have that (\ref{por}) implies
\begin{align}
&(\cz \otimes Id)\cmy e^{y \D}= \nonumber \\
&(Id \otimes \cmy)\czmy e^{y \D}-  Res_x   \ \delta(y+x,z) (T \otimes Id)(Id \otimes \cx) \cmy e^{y \D}. \label{70}
\end{align}
Multiplying by $e^{-y \D}$ both sides of (\ref{70})  we get
\begin{align}
(&\cz \otimes Id)\cmy= \nonumber \\
&(Id \otimes \cmy)\czmy-  Res_x  \ \delta(y+x,z) (T \otimes Id)(Id \otimes \cx) \cmy. \label{para}
\end{align}
Replacing $-y$ for $y$ in (\ref{para}) gives us
\begin{align}
(&\cz \otimes Id)\cy= \nonumber \\
&(Id \otimes \cy)\czy-  Res_x  \  \delta(-y+x,z) (T \otimes Id)(Id \otimes \cx) \cy, \nonumber
\end{align}
which is the Co-Associator formula.

\end{proof}

Now, we can finish the proof of Theorem \ref{sozinha}. By Remarks \ref{desd}, it remains to show that Definition \ref{def3} implies the Co-Jacobi identity (\ref{CoJ}). On the one hand, the coefficient version of the Co-Commutator formula (\ref{cocom3}) implies the Co-Borcherds identity for all $p,q \in \mathbb{Z}$ with fixed $r_0=0$. On the other hand, the Co-Associator formula, proven to hold under the axioms of Definition \ref{def3} in Lemma \ref{under}, implies the Co-Borcherds identity for all $q,r \in \mathbb{Z}$ with fixed $p_0=0$. Therefore, using Theorem \ref{cesar}, we have that the axioms of Definition \ref{def3} imply the Co-Borcherds identity for all $p,q,r \in \mathbb{Z}$. Now, it is clear that the Co-Jacobi identity holds due to the equivalence between Definition \ref{def2} and Definition \ref{def1}.

A thorough analysis of Lemma (\ref{under}) shows that we can reformulate the definition of vertex coalgebra making focus on the Co-Associator formula.

\begin{definition} \label{def4}
A \textit{vertex coalgebra} consists of a $\mathbb{C}$-vector space $V$, together with linear maps
\begin{align*}
\cx: &V \longrightarrow (V \otimes V)((x^{-1})), \\
c: &V\longrightarrow \mathbb{C} \\
\D: &V\longrightarrow V
\end{align*}
satisfying the following axioms for all $v \in V$:
\vskip.2cm
\noindent $\bullet$ Left counit:
\begin{equation*} %\label{covac4}
  (c \otimes Id)\cx v=v.
\end{equation*}
\vskip.2cm
\noindent $\bullet$ Cocreation:
\begin{align} \label{covac41}
e^{x \D}= (Id \tt c) \cx.
\end{align}
\vskip.2cm
\noindent $\bullet$ Co-Skew symmetry
\begin{align} \label{coskew4}
T\cx=\cmx e^{x\D}.
\end{align}
\noindent $\bullet$ $\D$ formula
\begin{align} \label{dolor4}
\ddx \cx=  (\D \tt Id) \cx.
\end{align}
\vskip.2cm

\noindent $\bullet$ Co-Associator formula:
\begin{align}
(\cz &\otimes Id)\cy= \nonumber \\
&(Id \otimes \cy)\czy-  Res_x \  \dmyxz (T \otimes Id)(Id \otimes \cx) \cy.\label{coass4}
\end{align}
\end{definition}
\vskip.2cm
As we did in the previous case, our goal is to use Theorem \ref{cesar} to prove that Definition \ref{def4} is equivalent to the definitions that we have already described.
\vskip.5cm
\begin{theorem} \label{younever} Definition \ref{def4} is equivalent to Definition \ref{def1}.
\end{theorem}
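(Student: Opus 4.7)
The plan is to prove Theorem~\ref{younever} by mirroring the strategy of Theorem~\ref{sozinha}, with the roles of the Co-Commutator and Co-Associator formulas interchanged.

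The implication Definition~\ref{def1} $\Rightarrow$ Definition~\ref{def4} is essentially a bookkeeping exercise, since all the relevant statements have already been proved: the Co-Associator formula (\ref{coass4}) is Proposition~\ref{nothalf}(2), while Cocreation in the exponential form (\ref{covac41}), Co-Skew symmetry (\ref{coskew4}), and the $\D$ formula (\ref{dolor4}) are items (3), (4), and (1) of Proposition~\ref{hubb2}.

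For the reverse direction, the idea is to apply Theorem~\ref{cesar}, which deduces the full Co-Borcherds identity from two of its slices. The Co-Associator formula in coefficient form (\ref{Cocoass}) is precisely the Co-Borcherds identity for all $q,r \in \mathbb{Z}$ with $p_0 = 0$. Hence the core of the proof is to derive the Co-Commutator formula (\ref{cocom3})---which corresponds to the Co-Borcherds identity with $r_0 = 0$---from the remaining axioms of Definition~\ref{def4}. To do so, I would reverse the chain of equalities in the proof of Lemma~\ref{under}. Each manipulation there is a bijective transformation: the $T$ and $(Id \tt T)$ twists are involutions, the substitution $y \mapsto -y$ is invertible, the operators $e^{\pm y\D}$ cancel against their inverses, and the conjugation formula (\ref{13}) remains available thanks to Proposition~\ref{preta}. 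Concretely, starting from (\ref{coass4}) one substitutes $-y$ for $y$, multiplies on the right by $e^{y\D}$, uses the conjugation formula together with Co-Skew symmetry (\ref{coskew4}) to rewrite $\cmy e^{y\D}$ as $T\cy$ and $(Id \tt e^{y\D})\cz$ as $\czmy e^{y\D}$ (in reverse of the substitutions performed in Lemma~\ref{under}), then applies $(Id \tt T)$ together with the elementary $T$-identities used in Lemma~\ref{under}, and finally swaps the variables $z$ and $x$ to arrive at (\ref{cocom3}).

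Once both the Co-Commutator and Co-Associator formulas are in place, Theorem~\ref{cesar} supplies the Co-Borcherds identity for all $(p,q,r) \in \mathbb{Z}^3$, which by the equivalence between the generating-function Definition~\ref{def1} and the coefficient Definition~\ref{def2} yields the Co-Jacobi identity (\ref{CoJ}); note that the truncation condition (\ref{trun}) is already built into the codomain $(V \tt V)((x^{-1}))$ specified in Definition~\ref{def4}. The main obstacle is purely notational---one must keep careful track of variable substitutions and of the positions where the exponentials $e^{\pm y\D}$ act (on the underlying vector space $V$, not on the tensor factors)---but no genuinely new calculation beyond reversing the steps of Lemma~\ref{under} is required.
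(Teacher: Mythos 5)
Your proposal is correct and follows essentially the same route as the paper: the forward direction is the same bookkeeping via Proposition~\ref{nothalf}(2) and Proposition~\ref{hubb2}, and the reverse direction hinges on deriving the Co-Commutator formula by reversing the computation of Lemma~\ref{under} (this is exactly the paper's Lemma~\ref{chico}), after which Theorem~\ref{cesar} yields the full Co-Borcherds identity. The only cosmetic difference is that the paper routes the conclusion through the already-established equivalence of Definition~\ref{def3} with Definition~\ref{def1}, whereas you invoke Theorem~\ref{cesar} directly; the content is identical.
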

\vskip.2cm
Using Remarks \ref{desd}, Proposition \ref{hubb2} and (\ref{coas}) it is clear that the axioms of Definition \ref{def4} are satisfied under the conditions of Definition \ref{def1}.
In order to finish the proof of the equivalence, it is enough to show that Definition \ref{def4} implies Definition \ref{def3}. In fact, we only need to prove that the axioms of Definition \ref{def4} imply the Co-Commutator formula (\ref{cocom3}).

Using Proposition \ref{preta} it is clear that under the conditions of Definition \ref{def4} the following formulas hold:
\begin{align}
&\ddx \cx= \cx D^{\star} - (Id \otimes D^{\star}) \cx. \label{123}\\
&(Id \tt e^{-zD^{\star}}) \cx e^{zD^{\star}}= \cxz. \label{133}
\end{align}
Now, we can prove the following result:
\begin{lemma} \label{chico}
In the presence of Cocreation (\ref{covac41}) and the $\D$ formula (\ref{dolor4}), the Co-Skew symmetry formula (\ref{coskew4}) and the Co-Associator formula (\ref{coass4}) imply the Co-Commutator formula (\ref{cocom3}).
\end{lemma}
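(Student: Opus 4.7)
The plan is to mirror the argument of Lemma \ref{under} in reverse. Since Cocreation (\ref{covac41}), the $\D$ formula (\ref{dolor4}), and Co-Skew symmetry (\ref{coskew4}) yield, via Proposition \ref{preta}, both the $\D$-bracket formula (\ref{123}) and the conjugation formula (\ref{133}), every ingredient used in Lemma \ref{under} is available to us, and each of its steps is algebraically reversible.

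First I would start from the Co-Associator formula (\ref{coass4}) and substitute $y \mapsto -y$, producing the analog of equation (\ref{para}) of Lemma \ref{under}. Multiplying both sides on the right by $e^{y\D}$ yields the analog of (\ref{70}). Next I would use the conjugation formula (\ref{133}) in the form $\czmy e^{y\D} = (Id \tt e^{y\D})\cz$ (obtained from (\ref{133}) by the substitution $x \to z$, $z \to -y$), together with Co-Skew symmetry (\ref{coskew4}) in the form $\cmy e^{y\D} = T\cy$, to rewrite every occurrence of the factor $\cmy e^{y\D}$ in terms of $T\cy$ and $\cz$. After rearranging, this produces the analog of (\ref{sinta}),
\begin{align*}
(Id \tt T\cy)\cz - (\cz \tt Id) T\cy = Res_x \ \delta(y+x,z)(T \tt Id)(Id \tt \cx) T\cy.
\end{align*}

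Next I would apply $(Id \tt T)$ to both sides and use the three ancillary identities already verified in the proof of Lemma \ref{under},
\begin{align*}
(Id \tt T)(Id \tt T\cy)\cz &= (Id \tt \cy)\cz, \\
(Id \tt T)(\cz \tt Id) T\cy &= (T \tt Id)(Id \tt \cz)\cy, \\
(Id \tt T)(T \tt Id)(Id \tt \cx) T\cy &= (\cx \tt Id)\cy,
\end{align*}
together with the delta-function identity $\delta(y+x,z) = \dzxy$ (an instance of (\ref{d1})), to arrive at the analog of (\ref{cabeza}),
\begin{align*}
(Id \tt \cy)\cz - (T \tt Id)(Id \tt \cz)\cy = Res_x \ \dzxy (\cx \tt Id)\cy.
\end{align*}
Finally, interchanging the formal variables $z$ and $x$ delivers the Co-Commutator formula (\ref{cocom3}).

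I do not anticipate a real obstacle: every manipulation above is a reversible identity of formal series, and Lemma \ref{under} already packages the nontrivial ingredients. The main point requiring care will be the bookkeeping when inserting (rather than removing) the exponential factor $e^{y\D}$ via the conjugation formula, and when propagating the substitution $y \mapsto -y$ through the delta function $\dmyxz$ to reach $\delta(y+x,z)$.
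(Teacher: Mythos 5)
Your proposal is correct and follows essentially the same route as the paper's own proof: substitute $y \mapsto -y$ in the Co-Associator formula, multiply by $e^{y\D}$, use Co-Skew symmetry together with the conjugation formula from Proposition \ref{preta} to convert each factor $\cmy e^{y\D}$ into $T\cy$, then apply $(Id \tt T)$ with the three ancillary identities and the $\delta$-function identity. The only cosmetic difference is that you make the final interchange of the variables $x$ and $z$ explicit, which the paper leaves implicit.
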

\begin{proof}
We start by multiplying by $e^{y \D}$ the Co-Associator formula (\ref{coass4}) after replacing $y$ by $-y$ to obtain
\begin{align*}
(&\cz \otimes Id)\cmy e^{y \D} \nonumber \\
&=(Id \otimes \cmy)\czmy e^{y \D}-  Res_x \  \delta(y+x,z) (T \otimes Id)(Id \otimes \cx) \cmy e^{y \D},
\end{align*}
which due to (\ref{coskew4}) and (\ref{133}) implies
\begin{align*}
&(\cz \otimes Id)T \cy  \nonumber \\
&=(Id \otimes \cmy)(Id \tt e^{y \D})\cz - Res_x \  \delta(y+x,z) (T \otimes Id)(Id \otimes \cx) T\cy \nonumber \\
&=(Id \otimes T\cy)\cz - Res_x \  \delta(y+x,z) (T \otimes Id)(Id \otimes \cx) T\cy.
\end{align*}
Thus, we have that
\begin{align}
(Id \otimes T\cy)\cz-(\cz \otimes Id)T \cy=Res_x \  \delta(y+x,z) (T \otimes Id)(Id \otimes \cx) T\cy. \label{lily}
\end{align}
Using that $\delta(y+x,z)=\dzxy$, and applying $(Id \tt T)$ to (\ref{lily}) we obtain
\begin{align}
(Id \tt T)(Id \otimes T\cy)\cz-(Id \tt T)(\cz \otimes Id)T \cy= \nonumber \\
Res_x \  \dzxy (Id \tt T)(T \otimes Id)(Id \otimes \cx) T\cy. \label{almost}
\end{align}
Using that
\begin{align*}
(Id \tt T)(Id \tt T \cy) \cz = (Id \tt \cy) \cz,
\end{align*}
\begin{align*}
(Id \tt T) (\cz \tt Id) T \cy= (T \tt Id)(Id \tt \cz) \cy,
\end{align*}
and
\begin{align*}
(Id \tt T)(T \tt Id)(Id \tt \cx) T \cy= (\cx \tt Id) \cy
\end{align*}
we have that (\ref{almost}) implies
\begin{align*}
(Id \tt \cy) \cz-(T \tt Id)(Id \tt \cz) \cy=Res_x \  \dzxy (\cx \tt Id) \cy, \label{there}
\end{align*}
which is the Co-Commutator formula.

\end{proof}

Now, using Lemma \ref{chico} we have that Definition \ref{def4} implies Definition \ref{def3}. Next, the proven equivalence between Definition \ref{def3} and Definition \ref{def1} shows that Definition \ref{def4} implies Definition \ref{def1}, finishing the proof of Theorem \ref{younever}.
\vskip.6cm
\noindent{\bf Acknowledgement}
The  author was supported  by a scholarship granted by Conicet, Consejo Nacional de Investigaciones Cient\'ificas y T\'ecnicas (Argentina). The author would like to thank her advisor, Jos\'e I. Liberati, for his guidance and help throughout this work.
\vskip.7cm
\bibliographystyle{amsalpha}

\begin{thebibliography}{WWW}
%%%%%%%%%%%%%%%%%%%%%%%%%%%%%%%%%%%%%%%%%%%%%%%%%%%%%%%%%%%%%%%%%%%%%%%%%


\bibitem[BK]{BK}
B. Bakalov and V. G. Kac,  \textit{Field algebras,} Internat. Math. Res. Notices {\bf 3} (2003), 123-159.

\bibitem[FLM]{FLM}
I. B. Frenkel, J. Lepowsky and A. Meurman, \textit{Vertex operator algebras and the Monster,} Pure Applied Math., {\bf 134} Academic Press, Boston, (1988).


\bibitem[H]{H} K. Hubbard, {\em Vertex coalgebras, comodules,
cocommutativity and coassociativity}. J. Pure and Applied Algebra {\bf{213}} (2009), 109-126.

\bibitem[K]{K}
V. G. Kac, {\textit{Vertex algebras for beginners}}, University
Lecture Series, {\bf 10}. American Mathematical Society,
Providence, RI, 1996. Second edition 1998.

\bibitem[LL]{LL} J. Lepowsky; H. Li,
{\em Introduction to vertex operator algebras and their
representations}. Progress in Mathematics, {\bf{227}}. Birkhuser
Boston, Inc., Boston, MA, 2004. xiv+318 pp.


\bibitem[Li]{Li} H. Li, {\em Local systems of vertex
operators, vertex superalgebras and modules}. J. Pure Applied Algebra {\bf{109}} (1996), 143-195.

\bibitem[MN]{MN}A. Matsuo; K. Nagatomo, {\em Axioms for a vertex algebra and the locality of quantum fields}. MSJ Memoirs, 4. Mathematical Society of Japan, Tokyo, 1999. x+110 pp.



\end{thebibliography}

\end{document}